\newtheorem{theorem}{Theorem}[section]
\newtheorem{corollary}[theorem]{Corollary}
\newtheorem{lemma}[theorem]{Lemma}
\newtheorem{example}[theorem]{Example}
\theoremstyle{definition} 
\newtheorem{remark}[theorem]{Remark}
\theoremstyle{remark}
\theoremstyle{examples}
\newtheorem{appli}[theorem]{Applications}
\newtheorem*{theoremts}{Theorem}
\newenvironment{porofo}[1][Proof]{%
	\begin{proof}[#1]%
	}{%
	\end{proof}%
}
\newcommand{\cheg}{\mathcal{R}_{n,k}}
\newcommand{\chegg}{\mathcal{R}_{n,k}}
\newcommand{\sui}[3]{\mathrm{#1}(#2,#3)}
\newcommand{\suit}[1]{{#1}(n,k)}
\newcommand{\coul}[1]{#1_{n,k}}
\newcommand{\cou}[3]{#1_{#2,#3}}
\newcommand{\poi}[1]{#1_{i,j}}
\newcommand{\idc}{\in \natu}
\newcommand{\argtgen}[3]{(#1|#2)^{(\overline{#3})}}
\newcommand{\argtgenm}[3]{(#1|#2)^{(\underline{#3})}}
\newcommand{\es}{\textbf{E}}
\newcommand{\nes}{\textbf{NE}}
\newcommand{\ma}[1]{\mathbb{#1}}
\newcommand{\st}[2]{\mathcal{#1}_{#2}}
\newcommand{\fiel}[1]{\mathbb{#1}}
\newcommand{\real}{\mathbb{R}}
\newcommand{\natu}{\mathbb{N}}
\newcommand{\coss}{\cdots}
\newcommand{\reel}{\mathbb{R}}
\newcommand{\poiw}[1]{\mathrm{#1}}
\newcommand{\nkid}{}
\begin{document}

\begin{center}
	
	{\huge\bf Explicit Formulas and Combinatorial Interpretation of Triangular Arrays via Weighted Paths}
	
	\vspace{0.5cm}
	
	\textbf{AUBERT Voalaza Mahavily Romuald} \\
	\texttt{aubert@aims.ac.za} \\
	Department of Mathematics, Faculty of Science \\
	Laboratoire de Mathématiques et Applications de l’Université de Fianarantsoa (LaMAF) \\
	University of Fianarantsoa \\
	Postal Adress 601, Fianarantsoa, Madagascar
	
	\vspace{0.5cm}
	
	\textbf{RANDRIANIRINA Benjamin} \\
	\texttt{benjamin.randrianirina@univ-fianarantsoa.mg} \\
	Department of Mathematics, Faculty of Science \\
	Laboratoire de Mathématiques et Applications de l’Université de Fianarantsoa (LaMAF) \\
	University of Fianarantsoa \\
	Postal Adres 601, Fianarantsoa, Madagascar
\end{center}
	
	\section*{Abstract} 
	Using the lattice paths in  $\mathbb{N}\times\mathbb{N}$, we derive a general formula for sequences $\big(\suit{T}\big)\nkid$ satisfying the recurrence relation of the form:
	\begin{equation*}
	\suit{T}=\coul{a}T(n-1,k)+\coul{b}T(n-1,k-1).
	\end{equation*}
	We apply this result to the  case where $a_{n,k}=a_0+a_1k+a_2n$ and $b_{n,k}=b_0+b_1k+b_2n$. 
	This leads to explicit expressions for $\suit{T}$, with simpler formulas arising in the case $b_2=0$, as well as in the fully general case, using Fa\`a di Bruno's type expression. In particular, we analyze the case  $b_{n,k}=1$, which frequently occurs in enumerative combinatorics. Applications include explicit formulas for the $r$-Eulerian numbers.We also express the case $b_{n,k}=1$, using a transition matrix. We apply our results to several sequences.\\
	\textbf{Keywords:} triangular recurrence, weighted paths, $r$-Eulerian numbers, combinatorial interpretation.
	\\ \textbf{Mathematics Subject Classification}: 05A05; 05A15; 05A19

	\section{Introduction }
	\noindent	In this paper, we investigate certain sequence of numbers that satisfy a triangular recurrence of the form 
		\begin{equation}\label{recu}
		\suit{T}=(a_0+a_1k+a_2n)T(n-1,k)+(b_0+b_1k+b_2n)T(n-1,k-1).
		\end{equation}
		In their book \cite{GrahamKnuthPatashnik1994}, Graham et al. study binomial coefficients, Stirling numbers, and Eulerian numbers, and propose a generalization problem of the form \eqref{recu}. We call the resulting sequences GKP numbers. In combinatorics, several approaches have been developed to study these numbers. One of the earliest results is due to Neuwirth~\cite{neuwirth2001} (see also Spivey~\cite{Spivey2011JIS}), who obtained an explicit formula for the case $b_2=0$ using the Galton triangle. Spivey~\cite{Spivey2011JIS} further investigated several cases using finite differences. Analytical approaches have also appeared in various works, including those of Théorêt~\cite{theoret1994hyperbinomiales}, Wilf~\cite{wilf2004}, and Barbero~\cite{BARBEROG2014146}.

		Recall that a weighted set is a pair $(E,v)$, where $E$ is a finite nonempty set and $v$ is a map from $E$ to a ring of formal power series with coefficients in $\mathbb{C}$. We are generally interested in $|E|_v=\sum_{x\in E}v(x)$. 
		Using weighted paths in the  $\mathbb{N}\times\mathbb{N}$, we derive a general formula for the sequence $\big(\suit{T}\big)\nkid$ of the form:
		\begin{equation}
		\label{recu2}
		\suit{T}=\coul{a}T(n-1,k)+\coul{b}T(n-1,k-1).
		\end{equation}
 We apply this result to the special case of GKP numbers.
 Thus, the aim of this work is to study such numbers as total weights of certain paths, with initial conditions usually taken as
	\begin{equation}\label{condiusue}
	T(0,0)=1, \quad \suit{T}=0 \text{ if } n<\max(k,0).
	\end{equation}
	We remain in the setting where $a_{n,k}=a_0+a_1k+a_2n$ and $b_{n,k}=b_0+b_1k+b_2n$, with $a_{n,k}\neq 0$ and $b_{n,k}\neq 0$. In particular, we study the case  $\coul{b}=1$, which we denote mostly by $\big(F(n,k)\big)\nkid$.  In this way, we obtain a unified approach to generalized Stirling numbers (see Hsu and Shiue \cite{hsu1998unified}, Maier \cite{Maier2023}), the $r$-Whitney numbers (see \cite{Mezo2010Bernoulli}),  the $r$-Lah numbers (see Ni\'ul and R\'acz \cite{Niul-Racz}) and many others sequences in combinatorics.

	 However, generalizations of Eulerian numbers where $\coul{b}\neq 1$ are also important examples when discussing sequences satisfying \eqref{recu}. We provide an explicit formula for $T(n,k)$ with $a_{n,k}=a_0+a_1k+a_2n$ and $b_{n,k}=b_0+b_1k$, and apply it on the $r$-Eulerian numbers.

	\section*{Our Contributions and Paper Outline}
\noindent	We prove the main results, which provide explicit formulas for $T(n,k)$ of Fa\`a di Bruno's type expressions:
\begin{theoremts}[\ref{theocontr}]
	Let $\big(\suit{T}\big)$ be a sequence satisfying \eqref{recu} with $\coul{a} = a_1 k + a_0$, $\coul{b} = b_2n+b_1k+b_0$, and the usual initial conditions \eqref{condiusue}. 
	Then
	\begin{equation*}
		\suit{T} = \sum_{\{c_0+c_1+\coss+c_{k}=n-k, 0\leq c_i\leq n\}}  a_0^{c_0}\prod_{i=1}^{k} \big(a_0+a_1i\big)^{c_{i}} \Big(b_0+ (b_1+b_2)i+b_2 (\sum_{j=0}^{j=i-1}c_j) \Big).
	\end{equation*}
\end{theoremts}
\noindent More generally, we also obtain the following theorem.
\begin{theoremts}[\ref{theocontr2}]
	Let $\big(\suit{T}\big)$ be a sequence satisfying \eqref{recu} and the usual initial conditions \eqref{condiusue}. 
	Then
	\begin{equation*}
		\suit{T}
		= \sum_{\{1\leq p_1<p_2<\cdots <p_{n-k}\leq n\}}
		\prod_{i=1}^{n-k} \Big( (a_2+a_1)p_i-a_1 i+a_0 \Big)
		\prod_{i=1}^{k} \Big(b_0+ (b_1+b_2)i+b_2 \sum_{j=0}^{i-1}\# \{l: p_l-l=j\} \Big).
	\end{equation*}
\end{theoremts}
In section \ref{sect2}, we begin to interpret $T(n,k)$ as  a total weight of a set of paths and derive explicit formula. It includes theorems \ref{princitheo}, corollary \ref{corrnerw} and the two above theorems. We also apply these results to sequences defined by triangular recurrence. 

Section \ref{sect3} contains brief remarks that we can also represent the sequences as transition matrices. We prove that the case where $b_{n,k}=1,$ it is a transition matrix in the vector space $k[x]$.

In section \ref{sect4}, we present a simpler formula for the case $b_2=0$.
	\section{Weighted paths in $\mathbb{N}\times \mathbb{N}$ associated with sequences of numbers satisfy a triangular recurrence}\label{sect2}
		We define $\chegg$ as the set of paths starting from $(0,0)$ to $(n,k)$ in the lattice $\natu \times \natu$, using only 
		East steps \es\ \big($(i-1,j)\to (i,j)$\big) and North-East steps \nes\ \big($(i-1,j-1)\to (i,j)$\big).  Any element $\pi\in\mathcal{R}_{n,k}$ is a path of length $n$ and height $k$.
				
			Let $a=(\cou{a}{i}{j})_{i,j\in \natu}$ and $b=(\cou{b}{i}{j})_{i,j\in \natu}$ be two sequences of real numbers.  We assign a weight to each East step 
		\es\ $(i-1,j)\to (i,j)$ by $\cou{a}{i}{j}$, and to each North-East step  
		\nes\ $(i-1,j-1)\to (i,j)$ by $\cou{b}{i}{j}$ so one can define  a valuation (weighting)  
		$\poiw{\omega}_{a,b}:{\chegg}\to{\reel}$, such that the weight of a path $\pi=(p_1,p_2,\cdots,p_{n+k})\in \chegg$ is
		\(\poiw{\omega}_{a,b}(\pi):=\) the product of the weights of each $p_i$.
			We denote by $\suit{T_{a,b}} = |\chegg|_{\poiw{\omega}_{a,b}}= \sum_{\pi \in \cheg} \poiw{\omega_{a,b}}(\pi)$ the total weight of $\chegg$, and by  
		$\ma{T}_{a,b} = (\suit{T_{a,b}})_{n,k\in \natu}$ the associated infinite matrix.
	\begin{lemma}\label{rem2}
		The matrix 	$\ma{T}_{a,b}$ satisfies the following recurrence relation.
		\begin{equation}\label{relprinc} 
		\suit{T_{a,b}}=\coul{a}\sui{T_{a,b}}{n-1}{k}
		+\coul{b}\sui{T_{a,b}}{n-1}{k-1}.
		\end{equation}
		It satisfies also the usual initial condition \eqref{condiusue}. 
		Conversely, any relation of this form, for $\coul{a},\coul{b}\in \real$, may be interpreted as a weighting of $\chegg$.
	\end{lemma}
	\begin{porofo} Indeed, any path from $(0,0)$ to $(n,k)$ is either a path from $(0,0)$ to $(n-1,k)$ followed by an East step, or a path from $(0,0)$ to $(n-1,k-1)$ followed by an East step.
		\end{porofo}

\begin{example}\rm Figure \ref{saryresauex} represents a path from $(0,0)$ to $(6,3)$ whose weight is 
$a_{1,0}b_{2,1}b_{3,2}a_{4,2}b_{5,3}a_{6,3}$.
\end{example}
	\begin{figure}[H]
		\centering
		\begin{tikzpicture}[scale=0.9]
		\begin{axis}[
		axis lines=middle,
		axis line style={-Stealth},
		width=0.8\linewidth,
		enlargelimits=false,
		xmin=0, xmax=9,
		ymin=0, ymax=9,
		grid=both,
		xtick={0,...,7},
		ytick={0,...,7},
		xlabel={$n$},
		ylabel={$k$},
		clip=false,
		axis on top=true,
		]
		
		\addplot[color=red, domain=0:9, samples=2] {x};
		
		\pgfmathsetmacro{\yzero}{(0 - (-1))/(9 - (-1))}
		\pgfmathsetmacro{\xzero}{(0 - 0)/(9 - 0)}
		\node[anchor=west]  at (rel axis cs:1,\yzero) {East};
		\node[anchor=south] at (rel axis cs:\xzero,1) {North};

		\addplot[very thick, draw=blue] coordinates {(0,0) (1,0)};
		\addplot[very thick, draw=blue] coordinates {(3,2) (4,2)};
		\addplot[very thick, draw=blue] coordinates {(5,3) (6,3)};
		
		\addplot[very thick, draw=gray] coordinates {(1,0) (2,1)};
		\addplot[very thick, draw=gray] coordinates {(2,1) (3,2)};
		\addplot[very thick, draw=gray] coordinates {(4,2) (5,3)};
		
		\addplot[only marks, mark=*, mark size=2.2pt]
		coordinates {(0,0) (1,0) (2,1) (3,2) (4,2) (5,3) (6,3)};
		
		\end{axis}
		\end{tikzpicture}
		\caption{A path from $(0,0)$ to $(6,3)$}
		\label{saryresauex}
		\end{figure}
	
		Let $\mathcal{C}^\uparrow(k,n)$ be the set of strictly increasing functions from $[k]$ to $[n]$.
	We identify $\pi \in \cheg$ with $\sigma \in \mathcal{C}^\uparrow(k,n)$ by defining $\sigma(i)=j$ if the $i$-th \nes\ step is $(j-1,l-1)\to (j,l)$. 
	Given $\sigma \in \mathcal{C}^\uparrow(k,n)$, one can associate the increasing list $\tilde{\sigma}=(\tilde{\sigma}_i)_{1\leq i\leq n-k}$ of the elements of $[n]\setminus \sigma([k])$, and $|\mathcal{C}^\uparrow(k,n)|={n\choose k}$.
	\begin{example}\rm The path in Figure \ref{saryresauex} is identified with the map $\sigma$ such that $\sigma(1)=2$, $\sigma(2)=3$ and $\sigma(3)=5$, and $\tilde{\sigma}=(1,4,6)$.  
	\end{example}
	\begin{theorem}\label{princitheo}
		Let $a=(a_{n,k})_{n,k\in \natu}$, $b=(b_{n,k})_{n,k\in \natu}$ and $T=\big(\suit{T}\big)\nkid$ be three sequences such that $T$ satisfies the recurrence relation
	$\displaystyle	\suit{T}=\coul{a}T(n-1,k)+\coul{b}T(n-1,k-1),$
		with the initial conditions \eqref{condiusue}. Then
		\begin{equation}\label{pprinceq}
		\suit{T}
		=\sum_{\sigma \in \mathcal{C}^\uparrow(k,n)}
		\prod_{i=1}^{n-k} \big( \cou{a}{\tilde{\sigma}_i}{\tilde{\sigma}_i-i} \big)
		\prod_{i=1}^{k} \big( \cou{b}{\sigma(i)}{i} \big).
		\end{equation}
	\end{theorem}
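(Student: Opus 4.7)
The plan is to prove the formula by a direct combinatorial computation using the weighted-path interpretation set up in Definition \ref{defchempond} and the bijection recalled in Remark \ref{remarklem}, rather than by induction on the recurrence (although an inductive proof would be an easy sanity check at the end). By Remark \ref{rem2}, the sequence $\suit{T}$ defined by the recurrence \eqref{relprinc} with the initial conditions \eqref{condiusue} coincides with $\suit{T_{a,b}} = |\cheg|_{\omega_{a,b}}$. Hence it is enough to evaluate this total weight by summing the weight $\omega_{a,b}(\pi) = \prod_{\text{steps of }\pi} \text{weight}$ over all $\pi \in \cheg$.

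First, I would use the bijection of Remark \ref{remarklem}: every path $\pi\in\cheg$ is encoded by the strictly increasing map $\sigma\in\mathcal{C}^{\uparrow}(k,n)$ whose values $\sigma(1)<\cdots<\sigma(k)$ record the abscissae of the NE-steps, while the complementary list $\tilde\sigma_1<\cdots<\tilde\sigma_{n-k}$ records the abscissae of the E-steps. Reconstructing the positions of the path from $\sigma$ is what makes the formula explicit: after processing the first $\sigma(i)$ steps, the path has used exactly $i$ NE-steps and $\sigma(i)-i$ E-steps, so the $i$-th NE-step goes from $(\sigma(i)-1,\,i-1)$ to $(\sigma(i),\,i)$, contributing the weight $b_{\sigma(i),\,i}$. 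Similarly, by the time the $i$-th E-step is taken, $i-1$ E-steps and $\tilde\sigma_i-i$ NE-steps have occurred, so this step goes from $(\tilde\sigma_i-1,\,\tilde\sigma_i-i)$ to $(\tilde\sigma_i,\,\tilde\sigma_i-i)$, contributing $a_{\tilde\sigma_i,\,\tilde\sigma_i-i}$.

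Multiplying these weights over the $k$ NE-steps and the $n-k$ E-steps gives
\[
\omega_{a,b}(\pi_\sigma)
= \prod_{i=1}^{n-k} a_{\tilde\sigma_i,\,\tilde\sigma_i-i}
\cdot
\prod_{i=1}^{k} b_{\sigma(i),\,i},
\]
and summing over $\sigma\in\mathcal{C}^{\uparrow}(k,n)$, which is in bijection with $\cheg$, yields exactly \eqref{pprinceq}.

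The main obstacle, and really the only delicate point, is to get the second coordinates right: one must verify that at the moment the $i$-th NE-step (resp. $i$-th E-step) is taken the current ordinate is $i-1$ (resp.\ $\tilde\sigma_i - i$), so that the matching weight $b_{\sigma(i),i}$ (resp.\ $a_{\tilde\sigma_i,\tilde\sigma_i-i}$) is picked up exactly as written. This is a short bookkeeping argument based on the invariant that after $m$ steps the current point is $(m,\,\#\text{NE-steps so far})$. As a redundant check I would then verify that the right-hand side of \eqref{pprinceq} satisfies the recurrence \eqref{relprinc} by splitting the sum according to whether $n\in\sigma([k])$ (last step NE, weight $b_{n,k}$, reducing to $T(n-1,k-1)$) or $n\notin\sigma([k])$ (last step E, weight $a_{n,k}$, reducing to $T(n-1,k)$), which together with $T(0,0)=1$ characterises $T$ uniquely.
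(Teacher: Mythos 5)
Your proposal is correct and follows essentially the same route as the paper: both identify $\suit{T}$ with the total weight of $\cheg$ via Remark \ref{rem2}, encode each path by $\sigma$ and its complement $\tilde\sigma$ as in Remark \ref{remarklem}, and read off the step weights $b_{\sigma(i),i}$ and $a_{\tilde\sigma_i,\tilde\sigma_i-i}$. Your version merely spells out the coordinate bookkeeping (that the $i$-th NE-step lands at $(\sigma(i),i)$ and the $i$-th E-step at $(\tilde\sigma_i,\tilde\sigma_i-i)$) that the paper leaves implicit.
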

	
	\begin{porofo}
		Recall our notation: $\omega_{a,b} : \cheg \to \mathbb{R}$, the weighting, assigns
		the weight $\poi{a}$ to each East step \es\ $(i-1,j) \to (i,j)$ and the weight
		$\poi{b}$ to each North-East step \nes\ $(i-1,j-1) \to (i,j)$. Since $\suit{T}$
		is the total weight of $\cheg$, we have
		$\displaystyle
		\suit{T} = \sum_{\pi \in \cheg} \poiw{\omega_{a,b}}(\pi).
		$
		
		It is straightforward to see that each East step is weighted by $a_{\tilde{\sigma_i},\tilde{\sigma_i}-i}$ and each North-East step is weighted by $b_{\sigma(i),i}$. Thus, the result follows.
	\end{porofo}

The case $b_{n,k}=1$, corresponding to sequences $(F(n,k))$ satisfying the recurrence relation:
\begin{equation}\label{bbbcas}
	F(n,k) = F(n-1,k-1) + (a_2n+a_1k+a_0) F(n-1,k),
\end{equation}
with the usual initial conditions \eqref{condiusue}, is very common in combinatorics. In this case, \eqref{pprinceq} becomes 
	\begin{equation}\label{eqqq11}
	\suit{F}
	=\sum_{\sigma \in \mathcal{C}^\uparrow(k,n)}
	\prod_{i=1}^{n-k} \big( \cou{a}{\tilde{\sigma}_i}{\tilde{\sigma}_i-i} \big).
\end{equation}
Since $a_{n,k}=a_2n+a_1k+a_0$, we obtain 
\begin{corollary}\label{cor11} The sequence $(F(n,k))$ satisfies the following:
	\begin{equation}\label{formprinc}
		\suit{F}= \sum_{\{1\leq p_1<p_2<\coss <p_{n-k}\leq n\}}  
		\prod_{i=1}^{n-k} \Big( (a_2+a_1)p_i-a_1i+a_0 \Big).
	\end{equation} 
\end{corollary}

	\begin{lemma}\label{cor1} If $b_{n,k}=b_0+b_1k$, then 
		\begin{equation}\label{pprinceq1}
		\suit{T}
		=\argtgen{b_0 + b_1}{\, b_1 \,}{k}F(n,k),
		\end{equation} where  $\argtgen{b_0 + b_1}{\, b_1 \,}{k}= (b_0+b_1)(b_0+2b_1)\cdots(b_0+kb_1)$.\\
		In particular, if $b_1=0$ then: 
		\begin{equation}\label{pprinceq2}
		\suit{T}=b_0^k F(n,k).
		\end{equation}
		And if $b_0=0$, then:
		\begin{equation}\label{pprinceq3}
		\suit{T}=b_1^k k! F(n,k).
		\end{equation} 
		\end{lemma}
\begin{proof}
	Suppose that $b_{n,k} = b_0 + b_1 k$. Since each element $\pi \in \mathcal{R}_{n,k}$ has $k$ \nes\ steps, and the weight of each of these steps is independent of $n$, the step at height $1$ is weighted by $b_0 + b_1$, the step at height $2$ by $b_0 + 2b_1$, $\cdots$, and the step at height $k$ by $b_0 + k b_1$. Hence, the total weight of the \nes\ steps is
	\[
	(b_0 + b_1)(b_0 + 2b_1)\cdots(b_0 + k b_1).
	\]
	
	\noindent This means that, for the same $a_{n,k}$, if the weight of $\pi \in \cheg$ is $\omega(\pi)$ in the case $b_{n,k} = 1$, and $\omega'(\pi)$ in the case $b_{n,k} = b_0 + b_1 k$, then
	\[
	\omega'(\pi) = (b_0 + b_1)(b_0 + 2b_1)\cdots(b_0 + k b_1)\,\omega(\pi).
	\] Thus we can conclude \eqref{pprinceq1}.
\end{proof}

\noindent Combining Corollary \ref{cor11} and Lemma \ref{cor1}, we obtain
\begin{corollary}\label{corrnerw}
	Let $\big(\suit{T}\big)$ be a sequence satisfying \eqref{recu} with $\coul{a} = a_2n+a_1 k + a_0$, $\coul{b} = b_1k+b_0$, and the usual initial conditions \eqref{condiusue}. 
	Then
	\begin{equation}
		\suit{T} =\argtgen{b_0 + b_1}{\, b_1 \,}{k} \sum_{\{1\leq p_1<p_2<\coss <p_{n-k}\leq n\}}  
		\prod_{i=1}^{n-k} \Big( (a_2+a_1)p_i-a_1i+a_0 \Big).
	\end{equation}
\end{corollary}

\begin{appli}\rm\label{appli1}
	\begin{enumerate}
\item Setting $a_2=-\alpha$, $a_1=\beta$, $a_0=\alpha+r$, we obtain the generalized Stirling Numbers $S_{\alpha,\beta,r}$ defined by Hsu and Shiue in \cite{hsu1998unified}, satisfying the recurrence relation (\cite{hsu1998unified} Equation (7)):
\begin{equation}
S_{\alpha,\beta,r}(n,k)=S_{\alpha,\beta,r}(n-1,k-1)+(-\alpha (n-1)+\beta k+r)S_{\alpha,\beta,r}(n-1,k).
\end{equation}
The generalized Stirling Numbers verify:
\begin{equation}\label{genestir}
S_{\alpha,\beta,r}(n,k)= \sum_{\{1\leq p_1<p_2<\coss <p_{n-k}\leq n\}}  
\prod_{i=1}^{n-k} \Big( (\beta-\alpha)p_i-\beta i+\alpha+r \Big).
\end{equation} 
Note that $S_{m,0,r}(n;k)$ is  the $r$-Whitney number of first kind, defined by Mez\"o in \cite{Mezo2010Bernoulli}, and  $S_{0,m,r}(n;k)$ is  the $r$-Whitney number of second kind.

 In \cite{rabeza}, Randrianirina introduces several generalizations of the Lah numbers.

\item \label{br1} If we consider the sequence defined by the recurrence
relation:
\begin{equation}
\lambda_{m,r}(n,k)=\lambda_{m,r}(n,k-1)+(m(n-1)+k+r)\lambda_{m,r}(n,k),
\label{equav3}
\end{equation}
we have $a_0=-m+r$; $a_1=1$ and $a_2=m$. Then:

\begin{equation}\label{equabase2}
\lambda_{m,r}(n,k)=\sum_{\{1\leq p_1<p_2<\coss <p_{n-k}\leq n\}}  
\prod_{i=1}^{n-k} \Big( (m+1)p_i-i-m+r \Big).
\end{equation}
$\lambda_{m,r}(n,k)$ is the matrix associated with the $\mathbb{L}$-species (see \cite{rabeza2}) $z\mathbb{E}_\beta\circ \mathcal{Q}_m^+)\cdot\mathcal{Q}_m^r=T_{Z=z,X=x}\mathcal{G}(ZX^r)$, $\mathcal{G}$ being the combinatorial differential operator associated with the grammar $G=\{z\to \mu zx^{m+1}, x\to x^{m+1}\}$, $\mathbb{E}_\beta$ is the species of sets weithed by $w(A)=\mu^{|A|},$ and $\mathcal{Q}_m$ is the $\mathbb{L}$-species of $m$-Stirling permutation, weighted by $w'(\sigma)=x^{mn+1}$ if $\sigma\in\mathcal{Q}[n]$.

\item	Setting $a_2=a_1=m$, $a_0=r+s$, we obtain the numbers $\Lambda_{m,r,s}(n,k)$, satisfying the relation:
\begin{equation}
\label{equak}
\Lambda_{m,r,s}(n,k)=\Lambda_{m,r,s}(n-1,k-1)+(m(n+k)+r+s)\Lambda_{m,r,s}(n-1,k).
\end{equation}
We have \begin{equation}
\Lambda_{m,r,s}(n,k)= \sum_{\{1\leq p_1<p_2<\coss <p_{n-k}\leq n\}}  
\prod_{i=1}^{n-k} \Big( m(2p_i-i)+r+s \Big).
\end{equation} 
$\Lambda_{m,r,s}(n,k)$ is the matrix associated with the $\mathbb{L}$-species $\mathbb{F}=T_{V=v,U=u,X=x}\mathcal{G}(VU^sX^r)$, where
$\mathcal{G}$ is the combinatorial differential operator associated with the grammar $G=\{v \to \mu vu^m x^m, u \rightarrow ux^m , x \to x^{m+1} \}$.

Note that $\Lambda_{m,r,r}(n,k)$ is the $r$-Whitney-Lah numbers defined by
Gyimesi and  Ny\'ul in \cite{Gyi-Nyul}, and $\Lambda_{1,r,r}(n,k)$ is the $r$-Lah numbers defined by Ny\'ul and R\'acz \cite{Niul-Racz})
\item If we set $a_1=a_2=m$ and $a_0=mr$, we obtain the sequence ($\tau_{m,r}(n,k)$)
satisfying the relation: 
\begin{equation} 
\label{equatau}
\tau_{m,r}(n,k)=\tau_{m,r}(n-1,k-1)+m(n+k+r)\tau_{m,r}(n-1,k),
\end{equation}
and associated with $(z\mathbb{E}_\beta \circ(\mathbb{L}_{\gamma})^+)\cdot(\mathbb{L}_{\gamma'})^r=T_{Z=z,X=x}\mathcal{G}(ZX^r)$, where
$\mathcal{G}$ is the combinatorial differential operator associated with the grammar $G=\{v \to \mu z x^r, x \to mx^{2} \}$.
We have 
\begin{equation}\label{equabase3}
\tau_{m,r}(n,k)= \sum_{\{1\leq p_1<p_2<\coss <p_{n-k}\leq n\}}  
\prod_{i=1}^{n-k} \Big( m(2p_i-i+r \Big).
\end{equation}
\end{enumerate}

\end{appli}

To continue, let's see an useful tool.
Let the sets $A,B,C$ be defined by
\begin{align*}
	A=\mathcal{C}^\uparrow(k,n)=&\{\sigma=(\sigma_1,\sigma_2,\coss,\sigma_{k}): 1\leq \sigma_1<\sigma_2<\coss <\sigma_{k}\leq n\}&;\\
	B=\mathcal{C}^\uparrow(n-k,n)=&\{\tilde{\sigma}=(\tilde{\sigma}_1,\tilde{\sigma}_2,\coss,\tilde{\sigma}_{n-k}): 1\leq \tilde{\sigma}_1<\tilde{\sigma}_2<\coss <\tilde{\sigma}_{n-k}\leq n\} \text{ and }\\
	C=&\{ \underline{c}=(c_0,c_1,\coss,c_{k}): \sum_{i=0}^{k}c_i=n-k, \quad \forall i, c_i\in \{0,1,\coss, n\} \}.
\end{align*}
We equip $B$ and $C$ with the weights $\beta$ and $\delta$ defined by
\begin{equation*}
	\beta(\tilde{\sigma}):=\prod_{j=1}^{n-k} \big(a_0+(\tilde{\sigma}_j-j)a_1\big)
	\text{ and } 
	\delta(\underline{c}:)=\prod_{i=0}^{k} \big(a_0+ia_1\big)^{c_{i}}.
\end{equation*}

\begin{lemma}\label{abc}
	The correspondences 
	\begin{enumerate}[label=(\alph*)]
		\item \label{aaba} $C\to A$, $\underline{c}\mapsto \sigma$ (with $\sigma_i=i+\sum_{j=0}^{i-1}c_j$) and
		\item \label{bbbcb} $B\to C$,  $\tilde{\sigma}\mapsto \underline{c}$ (with $c_i:=\# \{j: \tilde{\sigma}_j-j=i\}$)
	\end{enumerate}
	are well-defined bijections. Moreover $ (B,\beta) $ and $ (C,\delta)$ are isomorphic as weighted sets.
\end{lemma}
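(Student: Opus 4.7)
The plan is to prove the two correspondences are bijections in turn, and then observe that the weight preservation under (b) follows from a simple regrouping of the factors of $\beta$.

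For part (a), well-definedness is direct: since each $c_j\geq 0$, the differences $\sigma_i-\sigma_{i-1}=1+c_{i-1}\geq 1$ force strict monotonicity, while $\sigma_1=1+c_0\geq 1$ and $\sigma_k=k+\sum_{j=0}^{k-1}c_j\leq k+(n-k)=n$ place $\sigma$ inside $A$. For the inverse I would set $c_0=\sigma_1-1$, $c_i=\sigma_{i+1}-\sigma_i-1$ for $1\leq i\leq k-1$, and $c_k=n-\sigma_k$; the sum telescopes to $n-k$, so the preimage lies in $C$, and both compositions are the identity by construction.

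For part (b), I first check that the statistic $\tilde{\sigma}_j-j$ takes values in $\{0,1,\ldots,k\}$: strict monotonicity gives $\tilde{\sigma}_j\geq j$, and the fact that there are $n-k-j$ entries of $\tilde{\sigma}$ strictly above $\tilde{\sigma}_j$, all bounded by $n$, yields $\tilde{\sigma}_j\leq k+j$. Hence the vector $\underline{c}$ truly has length $k+1$, and $\sum_{i=0}^{k}c_i=n-k$ counts each index $j\in[n-k]$ exactly once. Rather than write down the inverse of (b) directly, I would argue bijectivity by observing that the composition of (b) with (a) is the complementation bijection from $B$ to $A$ of Remark~\ref{remarklem}; since (a) is already a bijection, so is (b).

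Finally, weight preservation is a regrouping argument. When $\tilde{\sigma}\leftrightarrow \underline{c}$,
\[
\beta(\tilde{\sigma})
=\prod_{j=1}^{n-k}\bigl(a_0+(\tilde{\sigma}_j-j)a_1\bigr)
=\prod_{i=0}^{k}(a_0+ia_1)^{\#\{j\,:\,\tilde{\sigma}_j-j=i\}}
=\prod_{i=0}^{k}(a_0+ia_1)^{c_i}
=\delta(\underline{c}).
\]
The only genuine bookkeeping step, and the one I expect to be the main obstacle, is the bound $\tilde{\sigma}_j-j\leq k$: this is what guarantees that the exponents in $\delta$ really range over $i=0,\ldots,k$ and that the map $\underline{c}\mapsto\sigma$ in (a) lands in $A$ rather than overshooting $n$. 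Once that is clarified, the rest is inspection of the definitions.
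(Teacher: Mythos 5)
Your proof is correct, but it routes the bijectivity arguments differently from the paper. For (a) the paper contents itself with injectivity plus the observation that $A$ and $C$ are finite of the same cardinality $\binom{n}{k}$ (Remark~\ref{remarklem} and the count of weak compositions); you instead write down the explicit inverse $c_0=\sigma_1-1$, $c_i=\sigma_{i+1}-\sigma_i-1$, $c_k=n-\sigma_k$, which is more self-contained and also shows directly that the preimage lands in $C$. For (b) the paper constructs the inverse map $\underline{c}\mapsto\tilde{\sigma}$ explicitly and recursively (setting $\tilde{\sigma}_l=l+i$ for $\sum_{j<i}c_j<l\leq\sum_{j\leq i}c_j$), whereas you deduce bijectivity by identifying the composite $(a)\circ(b):B\to C\to A$ with the complementation bijection of Remark~\ref{remarklem} and then cancelling the already-established bijection (a). That identification is true — one checks that $i+\#\{l:\tilde{\sigma}_l-l<i\}$ is exactly the $i$-th element of $[n]\setminus\tilde{\sigma}([n-k])$, since $\tilde{\sigma}_l-l$ counts the complement elements below $\tilde{\sigma}_l$ — but you assert it without proof, and it is the load-bearing step of your argument for (b); a referee would want that one line spelled out. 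On the other hand, your verification that $\tilde{\sigma}_j-j\leq k$ (so that the statistic really takes values in $\{0,\dots,k\}$) is a point the paper glosses over, and your regrouping argument for $\beta(\tilde{\sigma})=\delta(\underline{c})$ matches the paper's.
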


\begin{proof}
	For \ref{aaba}, well-definedness and injectivity are clear. Since the sets are finite and have the same cardinality\footnote{We notice it earlier $|A|=\binom{n}{k}$  and the known fact about weak composition of $n$ in $k+1$ parts.}, injectivity implies bijectivity.
	
	For \ref{bbbcb}, the map is well defined because each $c_i$ lies in $\{0,1,\coss,n\}$ and the sum $\sum c_i$ is precisely the length of $\tilde{\sigma},$ $|\tilde{\sigma}|=n-k$. Its inverse is defined recursively as follows:
	\begin{itemize}
		\item for $i=0$: for all $l$ with $1\leq l\leq c_0$, set $\tilde{\sigma}_l=l$;
		\item for $i=1,\dots,k$: for all $l$ with
		\[
		\sum_{j=0}^{i-1}c_j< l\leq \sum_{j=0}^{i} c_j,
		\]
		set $\tilde{\sigma}_l=l+i$.
	\end{itemize}
	By construction, $\tilde{\sigma}_1\geq 1$ and $\tilde{\sigma}_l<\tilde{\sigma}_{l+1}\leq (n-k)+k=n$, so $\tilde{\sigma}\in B$. It is also clear that this is the inverse of our correspondence.
	
	To show that it is a morphism (preserves weights), it suffices to use the definition of $c_i$, which counts the indices $j$ such that $\tilde{\sigma}_j-j=i$. 
\end{proof}
\noindent This lemma yields one of the main results of this paper. We obtain the following theorem.
\begin{theorem}\label{theocontr}
	Let $\big(\suit{T}\big)$ be a sequence satisfying \eqref{recu} with $\coul{a} = a_1 k + a_0$, $\coul{b} = b_2n+b_1k+b_0$, and the usual initial conditions \eqref{condiusue}. 
	Then
	\begin{equation}
	\suit{T} = \sum_{\{c_0+c_1+\coss+c_{k}=n-k, 0\leq c_i\leq n\}}  a_0^{c_0}\prod_{i=1}^{k} \big(a_0+a_1i\big)^{c_{i}} \Big(b_0+ (b_1+b_2)i+b_2 (\sum_{j=0}^{j=i-1}c_j) \Big).
	\end{equation}
\end{theorem}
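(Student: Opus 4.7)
The plan is to read the statement as a direct specialization of Theorem \ref{princitheo} under the change of summation variables supplied by Lemma \ref{abc}. The hypotheses give $a_{i,j}=a_0+a_1 j$ (independent of $i$ since $a_2=0$) and $b_{i,j}=b_0+b_1 j+b_2 i$, so Theorem \ref{princitheo} specializes to
\begin{equation*}
\suit{T}=\sum_{\sigma\in\mathcal{C}^\uparrow(k,n)}
\prod_{i=1}^{n-k}\big(a_0+a_1(\tilde{\sigma}_i-i)\big)\,
\prod_{i=1}^{k}\big(b_0+b_1 i+b_2\sigma(i)\big).
\end{equation*}
The East-step product is exactly $\beta(\tilde{\sigma})$ as defined before Lemma \ref{abc}, and the North-East factor is a linear expression in $\sigma(i)$.

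Next, I would transport the summation from $\mathcal{C}^\uparrow(k,n)$ to $C$ using the two bijections of Lemma \ref{abc}. Part \ref{bbbcb} of the lemma gives $\beta(\tilde{\sigma})=\delta(\underline{c})=\prod_{i=0}^{k}(a_0+a_1 i)^{c_i}=a_0^{c_0}\prod_{i=1}^{k}(a_0+a_1 i)^{c_i}$, which accounts for the non-$b$ part of the target formula. Part \ref{aaba} of the lemma, $\sigma_i=i+\sum_{j=0}^{i-1}c_j$, lets me rewrite
\begin{equation*}
b_0+b_1 i+b_2\sigma(i)=b_0+(b_1+b_2)i+b_2\sum_{j=0}^{i-1}c_j,
\end{equation*}
which matches the last factor in the claimed identity.

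Combining these two rewrites, the sum indexed by $\sigma$ becomes a sum indexed by $\underline{c}\in C$, yielding precisely the right-hand side of the theorem. Strictly speaking, Lemma \ref{abc} as stated asserts only the weighted isomorphism $(B,\beta)\cong(C,\delta)$, so I would briefly note that the bijection of part \ref{aaba} applied to $\sigma$ (together with the bijection of Remark \ref{remarklem} identifying $\sigma$ with $\tilde{\sigma}$) is the same bijection used to match $(B,\beta)$ with $(C,\delta)$; under this shared bijection both factors transform correctly in parallel.

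There is no real obstacle here: the only mild subtlety is the bookkeeping between $\sigma$, $\tilde{\sigma}$, and $\underline{c}$. Once the explicit formula for $\sigma(i)$ in terms of $\underline{c}$ is substituted into the $b$-factor and $\beta(\tilde{\sigma})$ is recognized as $\delta(\underline{c})$, the statement follows immediately, so the proof should be only a few lines.
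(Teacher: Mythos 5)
Your proposal is correct and follows exactly the route the paper intends: the paper gives no written proof of Theorem \ref{theocontr} beyond stating that it "follows" from Lemma \ref{abc}, and your argument — specializing Theorem \ref{princitheo} and transporting the sum to $C$ via the two bijections, with $\beta(\tilde{\sigma})=\delta(\underline{c})$ and $\sigma_i=i+\sum_{j=0}^{i-1}c_j$ handling the two factors — is precisely that deduction with the bookkeeping made explicit. Your added remark that the bijections of parts \ref{aaba} and \ref{bbbcb} compose to the complementation bijection of Remark \ref{remarklem} is the one detail the paper leaves implicit, and it checks out.
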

\begin{proof}
	By Theorem \ref{princitheo}, we have 
	\[  \suit{T}
	=\sum_{\tilde{\sigma} \in \mathcal{C}^\uparrow(n-k,n)}
	\prod_{i=1}^{n-k} \big( a_0+ a_1( \tilde{\sigma}_i-i) \big)
	\prod_{i=1}^{k} \big( b_0+ib_1+b_2 \sigma_i \big). \] We apply Lemma \ref{abc} and conclude the result.
\end{proof}

\begin{appli}\rm\begin{enumerate}
		\item Recall that an $m$-Stirling permutation is a permutation $\sigma= \sigma_1\sigma_2\cdots\sigma_{mn}$ of the multiset $1^m2^m\cdots n^m$ such that for all $i,j, k$, if $i<j<k$ and $\sigma_i=\sigma_k$ then $\sigma_j\geq \sigma_i$. In \cite{He2023mthOrderEulerian}, 	
		Tian-Xiao He defines the $m$th order Eulerian numbers of order 
		as the number $T^{(r)}(n,k)$  of $m$-Stirling permutations having exactly $k$ descents, and proves that these numbers satisfy the following recurrence relation:
		\begin{equation}
		T^{(r)}(n,k)=\big(rn-k+(1-r)\big)T^{(r)}(n-1,k-1) + (k+1)T^{(r)}(n-1,k),
		\label{2.4}
		\end{equation}
		with $B^{(r)}(n,0)=1\mbox{ pour } $n$\geq 1\text{  et }~ B^{(r)}(n,k)=0$ if $n\leq k$ ou $k<0$.

		We apply Theorem \ref{theocontr} to get
		\begin{equation} 
		\suit{T^{(r)}} = \sum_{\{c_0+c_1+\coss+c_{k}=n-k, 0\leq c_i\leq n\}}  \prod_{i=1}^{k} \big(1+i)^{c_{i}} \Big( (r-1)(i-1)+r \sum_{j=0}^{j=i-1}c_j \Big).
		\end{equation}
		\item Let 's consider the $r$-Whitney-eulerian numbers defined by Mez\"o and Ramirez in \cite{Mezoram} (see also Toufik Mansour and all in \cite{manramshavil}) and satisfying the recurrence relation:
		\begin{equation}
		A_{m,r} (n, k) = (mk+r)A_{m,r}(n-1, k)+ (mn-mk+m-r) A_{m,r}(n-1, k-1).
		\end{equation}
		The $r$-Whitney-Eulerian numbers verify:
		\begin{equation}
		A_{m,r} (n, k) =\sum_{\{c_0+c_1+\coss+c_{k}=n-k, 0\leq c_i\leq n\}}  r^{c_0}\prod_{i=1}^{k} \big(r+mi)^{c_{i}} \Big(m-r+ m (\sum_{j=0}^{j=i-1}c_j) \Big).
		\end{equation}
	\end{enumerate} 
\end{appli}
\noindent 
By a similar way, one can obtain the general case, as follows.
\begin{theorem}\label{theocontr2}
	Let $\big(\suit{T}\big)$ be a sequence satisfying \eqref{recu} and the usual initial conditions \eqref{condiusue}. 
	Then
	\begin{equation}
		\suit{T}
		= \sum_{\{1\leq p_1<p_2<\cdots <p_{n-k}\leq n\}}
		\prod_{i=1}^{n-k} \Big( (a_2+a_1)p_i-a_1 i+a_0 \Big)
		\prod_{i=1}^{k} \Big(b_0+ (b_1+b_2)i+b_2 \sum_{j=0}^{i-1}\# \{l: p_l-l=j\} \Big).
	\end{equation}
\end{theorem}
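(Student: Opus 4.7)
The plan is to start directly from Theorem~\ref{princitheo}, which already provides an expression for $\suit{T}$ as a sum over $\sigma\in \mathcal{C}^\uparrow(k,n)$ in terms of East-step weights $a_{\tilde{\sigma}_i,\tilde{\sigma}_i-i}$ and North-East-step weights $b_{\sigma(i),i}$, and then re-parametrize everything by the complementary tuple $\tilde{\sigma}$ (which we rename to $(p_1,\dots,p_{n-k})$). Summation over $\sigma\in \mathcal{C}^\uparrow(k,n)$ is then equivalent to summation over strictly increasing tuples $1\leq p_1<\cdots<p_{n-k}\leq n$.

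First, I would substitute $a_{i,j}=a_0+a_1 j+a_2 i$ into the East-step factor. At each East step the index is $(\tilde{\sigma}_i,\tilde{\sigma}_i-i)$, so
\[
a_{\tilde{\sigma}_i,\tilde{\sigma}_i-i}=a_0+a_1(\tilde{\sigma}_i-i)+a_2\tilde{\sigma}_i=(a_2+a_1)p_i-a_1 i+a_0,
\]
which matches exactly the first product in the theorem. This step is purely a substitution and presents no difficulty.

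The main point is handling the North-East factor $b_{\sigma(i),i}=b_0+b_1 i+b_2\sigma(i)$, where $\sigma$ is the tuple of North-East abscissae complementary to $\tilde{\sigma}$. The plan is to invoke Lemma~\ref{abc}: part~\ref{bbbcb} gives $c_j=\#\{l:\tilde{\sigma}_l-l=j\}$, and part~\ref{aaba} then gives $\sigma(i)=i+\sum_{j=0}^{i-1}c_j$. Substituting back yields
\[
b_{\sigma(i),i}=b_0+b_1 i+b_2\Big(i+\sum_{j=0}^{i-1}\#\{l:p_l-l=j\}\Big)=b_0+(b_1+b_2)i+b_2\sum_{j=0}^{i-1}\#\{l:p_l-l=j\},
\]
which is precisely the second product appearing in the statement. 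Combining the two rewritings and replacing the sum over $\sigma$ by a sum over the complementary tuples $p_1<\cdots<p_{n-k}$ completes the proof.

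The only genuine obstacle is making sure that the translation $\sigma(i)\leftrightarrow\tilde{\sigma}$ through Lemma~\ref{abc} is deployed correctly; once we accept that lemma as a weight-preserving bijection between $B$ and $C$, the identity for $\sigma(i)$ in terms of the multiplicities $\#\{l:p_l-l=j\}$ is exactly what we need, and the rest is formula manipulation. I expect no further calculation beyond what is already inherent in Theorem~\ref{princitheo} and Lemma~\ref{abc}.
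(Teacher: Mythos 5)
Your proposal is correct and is essentially the derivation the paper intends (the paper states Theorem~\ref{theocontr2} without a written proof, but it is meant to follow from Theorem~\ref{princitheo} combined with Lemma~\ref{abc} exactly as you describe). The one point worth making explicit is that the composite $B\to C\to A$ of the two maps in Lemma~\ref{abc} really does send $\tilde{\sigma}$ to its complement $\sigma$, which holds because $\tilde{\sigma}_l-l$ counts the complement elements below $\tilde{\sigma}_l$, so $\#\{l:\tilde{\sigma}_l-l<i\}=\sigma(i)-i$; with that observation your substitutions reproduce both products in the statement.
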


\begin{porofo}
	First, the bijection that identifies $\tilde{\sigma}=(\tilde{\sigma}_i)_{1\leq i \leq n-k} \in \mathcal{C}^\uparrow(n-k,n)$ with $\sigma=(\sigma(i))_{1\leq i\leq k}\in\mathcal{C}^\uparrow(k,n)$ can be described as follows:
	\[
	\sigma(i)= i+\sum_{j=0}^{i-1} \# \{l: \tilde{\sigma}_l-l=j\}.
	\]
	Now, we can re-write Theorem~\ref{princitheo} for $a_{n,k}=a_2n+a_1k+a_0$ and $b_{n,k}=b_2n+b_1k+b_0$, as 
	\[
	\suit{T}
	=\sum_{\tilde{\sigma} \in \mathcal{C}^\uparrow(n-k,n)}
	\prod_{i=1}^{n-k} \big( a_2 \tilde{\sigma}_i+a_1(\tilde{\sigma}_i-i)+a_0 \big)
	\prod_{i=1}^{k} \Big( b_2\big(i+\sum_{j=0}^{i-1} \# \{l: \tilde{\sigma}_l-l=j\}\big)+b_1 i+b_0 \Big).
	\]
\end{porofo}

\noindent Now,  let's consider the generating series $T_n(x)=\sum_{k=0}^nT(n,k)x^k$ and $\overline{T}_k(y)=\sum_{n\geq k}T(n,k)y^n.$
\begin{theorem}
	The generating series $T_n(y)$ and $\overline{T}_k(y)$ satisfy:
	\begin{equation}
	T_n(x)=\big((b_2n+b_1+b_0)x+a_2n+a_0\big)T_{n-1}(x)+(b_1x^2+a_1x)T_{n-1}'(x)
	\end{equation} 
	and 
	\begin{equation}
	(1-(a_2+a_1k+a_0)y)\overline{T}_k(y)=a_2y^2\overline{T}_k'(y)+b_2y^2\overline{T}_{k-1}'(y)+(b_2+b_1k+b_0)y\overline{T}_{k-1}(y).
	\end{equation}
\end{theorem}
\begin{porofo}
	 We set  $\displaystyle\mathcal{R}_{n,*}:=\bigcup_{k\geq 0}\mathcal{R}_{n,k}$, $\displaystyle\mathcal{R}_{*,k}:=\bigcup_{n\geq k}\mathcal{R}_{n,k}$ and if $\pi\in\mathcal{R}_{n,k}$ then $l(\pi)=n$ and $h(\pi)=k$.
	We have
	 \begin{align*} T_n(x)&\displaystyle=\sum_{\pi\in\mathcal{R}_{n,*}}\omega_{a,b}(\pi)x^{h(\pi)}=\sum_{\pi\in\mathcal{R}_{n-1,*}}(b_2n+b_1(h(\pi)+1)+b_0)\omega_{a,b}(\pi)x^{h(\pi)+1}\\
		&\hspace{1cm} \displaystyle+\sum_{\pi\in\mathcal{R}_{n-1,*}}(a_2n+a_1h(\pi)+a_0)\omega_{a,b}(\pi)x^{h(\pi)}\\
		&\displaystyle=(b_2n+b_1+b_0)x\sum_{\pi\in\mathcal{R}_{n-1,*}}\omega_{a,b}(\pi)x^{h(\pi)}+b_1x^2\sum_{\pi\in\mathcal{R}_{n-1,*}}h(\pi)\omega_{a,b}(\pi)x^{h(\pi)-1}\\
		&\hspace{1cm} \displaystyle+(a_2n+a_0)\sum_{\pi\in\mathcal{R}_{n-1,*}}\omega_{a,b}(\pi)x^{h(\pi)}  +a_1x\sum_{\pi\in\mathcal{R}_{n-1,*}}h(\pi)\omega_{a,b}(\pi)x^{h(\pi)-1}\\
		&=\big((b_2n+b_1+b_0)x+a_2n+a_0\big)T_{n-1}(x)+(b_1x^2+a_1x)T_{n-1}'(x).
	\end{align*}
\noindent	In the other hand 
	\begin{align*}
		\overline{T}_k(y)&\displaystyle=\sum_{n\geq k}T(n,k)y^n= \sum_{\pi\in\mathcal{R}_{*,k}}\omega_{a,b}(\pi)y^{l(\pi)}\\&\displaystyle=\sum_{\pi\in\mathcal{R}_{*,k}}(a_2(l(\pi)+1)+a_1k+a_0)\omega_{a,b}(\pi)y^{l(\pi)+1}\\
		&\hspace{4cm} \displaystyle+\sum_{\pi\in\mathcal{R}_{*,k-1}}(b_2(l(\pi)+1)+b_1k+b_0)\omega_{a,b}(\pi)y^{l(\pi)+1}\\
		&\displaystyle=a_2y^2\sum_{\pi\in\mathcal{R}_{*,k}}l(\pi)\omega_{a,b}(\pi)y^{l(\pi)-1}+(a_2+a_1k+a_0)y\sum_{\pi\in\mathcal{R}_{*,k}}\omega_{a,b}(\pi)y^{l(\pi)}\\
		& \hspace{1cm} \displaystyle+b_2y^2\sum_{\pi\in\mathcal{R}_{*,k-1}}l(\pi)\omega_{a,b}(\pi)y^{l(\pi)-1}+(b_2+b_1k+b_0)y\sum_{\pi\in\mathcal{R}_{*,k-1}}\omega_{a,b}(\pi)y^{l(\pi)}\\
		&\displaystyle=a_2y^2\overline{T}_k'(y)+(a_2+a_1k+a_0)y\overline{T}_k(y)+b_2y^2\overline{T}_{k-1}'(y)+(b_2+b_1k+b_0)y\overline{T}_{k-1}(y).\\
	\end{align*}
\end{porofo}

\noindent Noticing $T_0(x)=1$ and if $a_2=0$, $\overline{T}_0(y)=\frac{1}{1-a_0y}$, we get directly:
\begin{corollary}
	If $b_1=a_1=0$, then 
	\begin{equation}
	T_n(x)=\prod_{i=1}^n\big((b_2i+b_1+b_0)x+a_2i+a_0\big).
	\end{equation}
	And if $a_2=b_2=0$ then 
	\begin{equation}
\overline{T}_k(y)= \frac{1}{1-a_0y}\prod_{i=1}^k\frac{(b_1k+b_0)y}{(1-(a_1k+a_0)y)}
	\end{equation}
\end{corollary}
\noindent These identities include the following results:
\begin{enumerate}
	\item The $r$-Whitney numbers of first kind $w_{m,r}(n,k)$, corresponding to $a_2=-m$, $a_1=0$, $a_0=m-r$, $b_2=b_1=0$ and $b_0=1$,  satisfy:
	\begin{equation}\label{keystirlinnumber111}
T_n(x)=\sum_{k=0}^nw_{m,r}(n,k)x^k=\prod_{i=0}^{n-1}(x-r-mi)
	\end{equation}
	\item The $r$-Whitney numbers of second kind $W_{m,r}(n,k)$, corresponding to $a_2=0$, $a_1=m$, $a_0=r$, $b_2=b_1=0$ and $b_0=1$,  satisfy:
	\begin{equation} \label{keystirlinnumber112}
\overline{T}_k(y)=\sum_{n\geq 0}W_{m,r}(n,k)y^k=\prod_{i=0}^k\frac{y}{(1-(mk+r)y)}.
	\end{equation}
\end{enumerate}
These results are seen in several combinatorial pieces of literature (\cite{rabeza}, \cite{Cheon2012rWhitney}).
\section{The sequence $(F(n,k))$ as transition matrix on the vector space $\mathbb{K}[x]$}\label{sect3}

Moreover, this approach permits us also to write some triangular arrays as transition matrix in the vector space $\mathbb{K}[x]$.
\begin{theorem}\label{theopass}
	\label{key}
	For any sequences $\big((F(n,k))\big)$, that satisfies the usual initial conditions
	\begin{equation*}
	F(0,0)=1, \quad \suit{F}=0 \text{ if } n<\max(k,0),
	\end{equation*} the following facts are equivalent
	\begin{equation}\label{bbbcas}
	\forall n,k: \quad		F(n,k) = F(n-1,k-1) + (a_2n+a_1k+a_0) F(n-1,k).
	\end{equation}
	and
	\begin{equation}\label{equabase1}
	\forall x: \quad 	\argtgen{x}{a_2}{n}= \sum_{k=0}^n \suit{F} \argtgenm{x-a_0-a_2}{a_1}{k}.
	\end{equation}
	where $ \argtgen{x}{a_2}{n}=\prod_{i=0}^{i=n-1}(x+ia_2)$ and $\argtgenm{x-a_0-a_2}{a_1}{k}=\prod_{i=0}^{k-1}(x-a_0-a_2-ia_1).$
\end{theorem}
\begin{porofo}
	Let $\st{R}{n}=\cup_{k\geq 0}\mathcal{R}_{n,k}$ be the set of paths of length $n$ in $\mathbb{N}\times \mathbb{N}$,  $\pi \in \st{R}{n}$. We define a  weighting on $\st{R}{n}$ by setting  $v(\pi)=\omega_{n,h(\pi)}(\pi)\argtgenm{x-a_0-a_2}{a_1}{h(\pi)}$ if where $\omega_{n,k}$ is the weighting associated with $F(n,k)$ and $h(\pi)$ is the height of $\pi$.  Then:
	\begin{equation}
	|\st{R}{n}|_v=\sum_{\pi\in \st{R}{n}}v(\pi)= \sum_{k\geq 0} \sum_{\pi\in \st{R}{n,k}}\omega_{n,k}(\pi)\argtgenm{x-a_0-a_2}{a_1}{k}=\sum_{k\geq 0} \suit{F} \argtgenm{x-a_0-a_2}{a_1}{k}. \notag
	\end{equation}
	Since a path $\pi$ of length $n$ can be obtained from a path $u$ of length $n-1$ by adding either an North–East step or a East step, we have
	
	\noindent	$\begin{array}{ll}|\mathcal{R}_n|_v&=\displaystyle\sum_{\pi\in\mathcal{R}_n}v(\pi)=\sum_{\pi\in\mathcal{R}_n}\omega_{n,h(\pi)}(\pi)(x-a_0-a_2|a_1)^{(\underline{h(\pi)})}\\ &\displaystyle =\sum_{u\in\mathcal{R}_{n-1}}(a_2n+a_1h(u)+a_0)\omega_{n-1,h(u)}(u)(x-a_0-a_2|a_1)^{(\underline{h(u)})}\\& \hspace{2cm} \displaystyle+\sum_{u\in\mathcal{R}_{n-1}}\omega_{n-1,h(u)}(u)(x-a_0-a_2|a_1)^{(\underline{h(u)+1})}\\
	& =\displaystyle	\sum_{u\in\mathcal{R}_{n-1}}[(a_2n+a_1h(u)+a_0)+(x-a_0-a_2-h(u)a_1)]\omega_{n-1,h(u)}(u)(x-a_0-a_2|a_1)^{(\underline{h(u)})}\\
	&=(x+a_2(n-1))|\mathcal{R}_{n-1}|_v.
	\end{array}$

	Noting that $|\st{R}{0}|_w=1$, we obtain the result.

\end{porofo}
\noindent
Equation (\ref{equabase1}) generalizes many results. For examples:
\begin{appli}\rm
	\begin{enumerate}	
		\item The generalized Stirling Numbers $S_{\alpha,\beta,r}$ verify: 
		\begin{equation}
		(x|-\alpha)^{(\overline{n})}=\sum_{k=0}^nS_{\alpha,\beta,r}(n,k)(x-r|\beta)^{(\underline{k})}.
		\end{equation} 
		We obtain Equation (1) of \cite{hsu1998unified} (seen also in \cite{Maier2023}).
		In particular, the $r$-Whitney numbers of first kind, verify:  \begin{equation}\label{equamezo1}\prod_{i=0}^{n-1}(x-mi)=\sum_{k=0}^nw_{m,r}(n,k)(x+r)^k.\end{equation}
		By replacing $x$ with $mx$ in this equation, we obtain Equation (2) of \cite{Mezo2010Bernoulli}.		
		And the $r$-Whitney number of second kind, verify: \begin{equation}\label{equamezo2}x^n=\sum_{k=0}^nW_{m,r}(n,k)\prod_{i=0}^{n-k}(x-r-im).\end{equation}
		By replacing $x$ with $mx+r$ in this equation, we obtain Equation (1) of \cite{Mezo2010Bernoulli}.

		\item If we consider the sequence $(\lambda_{m,r}(n,k)$ defined in application \ref{appli1}:
		we have:
		
		\begin{equation}\label{equabase2}
		\argtgen{x}{m}{n}=  \sum_{k=0}^n \lambda_{m,r}(n,k \argtgenm{x-r}{1}{k}.
		\end{equation}
		
		\item	 If we consider the sequence $(\Lambda_{m,r,s}(n,k)$,
		we have: \begin{equation}
		(x|m)^{(\overline{n})}=\sum_{k=0}^n\Lambda_{m,r,s}(n,k)(x-r-s-m|m)^{(\underline{k})}.
		\end{equation} 
		
		\item If we consider the sequence $\tau_{m,r}(n,k)$, with $a_1=a_2=m$ and $a_0=mr$, we obtain: 
		
		\begin{equation}\label{equabase3}
		\argtgen{x}{m}{n}= \sum_{k=0}^n \suit{\tau_{m,r}} \argtgenm{x-mr-m}{m}{k},
		\end{equation}
	\end{enumerate}
	
\end{appli}
\begin{remark} 	\rm
	We  note that $\big(\argtgen{x}{a_2}{n}\big)_{n\idc}$ form a basis of $\fiel{K}[x]$. The same holds for $\big(\argtgenm{x-a_0-a_2}{a_1}{k}\big)_{k\idc}$. Therefore, Theorem \ref{theopass} says that $\mathbb{F}^T=\big(\suit{F}\big)_{k,n\in \natu}$ is a transition matrix from$\big(\argtgenm{x-a_0-a_2}{a_1}{k}\big)_{k\idc}$  to $\big(\argtgen{x}{a_2}{n}\big)_{n\idc}$. The matrix $\mathbb{F}^T$
	is therefore invertible.
\end{remark}
\begin{theorem}\label{inverse}
	The inverse of  $\mathbb{F}^T$, is the matrix $\mathbb{H}^T=\big((H(n,k))\big)$ satisfying:
	\begin{equation}\label{reps}
	\forall n,k: \quad		H(n,k) = H(n-1,k-1) + (-a_1n-a_2k+a_1-a_0-a_2) H(n-1,k).
	\end{equation}
\end{theorem}
\begin{porofo}
	Consider the inverse $\big(H(n,k)\big)$ of $\big(F(n,k)\big)$. Using \eqref{equabase1}, we have
	$$\displaystyle
	\forall x: \quad \argtgenm{x-a_0-a_2}{a_1}{k}	= \sum_{k=0}^n \suit{H}  \argtgen{x}{a_2}{n}.$$
	We can change $y=x-a_0-a_2$ and use the fact $\argtgen{z}{c}{l}=\argtgenm{z}{-c}{l}$ in this equation to get 
	\[ \forall y: \quad 	\argtgen{y}{-a_1}{n}= \sum_{k=0}^n \suit{H} \argtgenm{y+a_0+a_2}{-a_2}{k}.\]
	Set $a'_2:=-a_1;\quad a'_1:=-a_2; \quad a'_0:=a_1-a_0-a_2,$ and plug in this equation. Then 
	\[\displaystyle
	\forall y: \quad 	\argtgen{y}{a'_2}{n}= \sum_{k=0}^n \suit{H} \argtgenm{y-a'_0-a'_2}{a'_1}{k}.\]
	So applying Theorem \ref{key},  we get
	$$\forall n,k: \quad		H(n,k) = H(n-1,k-1) + (a'_2n+a'_1k+a'_0) H(n-1,k).$$
\end{porofo}

\section{Closed formula for $F(n,k)$, and for $T(n,k)$ if $b_{n,k}=b_1k+b_0$}\label{sect4}
We now propose to prove a closed formula for the sequences $F(n,k)$.  The case of the sequences $\big(T(n,k)\big)$, with $b_{n,k}=b_0+b_1k$ then follows as a direct consequence.
\subsection{The case where $a_1\neq 0$}
\noindent For this, we need the following two lemmas.
\begin{lemma}\label{lem11}
	Let $P_n(x):=\prod_{l=1}^{l=n}(a_0+a_1x+a_2l)$. If $a_1\neq 0$, we have 
	\[F(n,k)=\frac{1}{a_1^k}[x^{(\underline{k})}]P_n(x).\]
\end{lemma}
\begin{porofo} Note that $P_n(x)=(a_0+a_1x+a_2n)P_{n-1}(x)$ and $x\cdot x^{(\underline{k})}=x^{(\underline{k+1})}+k \cdot x^{(\underline{k})}$. These facts imply that $G(n,k):=[x^{(\underline{k})}]P_n(x)$ satisfies 
	\[ G(0,0)=1 ; \quad G(n,k)= (a_0+a_1k+a_2n)G(n-1,k)+a_1G(n-1,k-1). \]
	Using Lemma \ref{cor1}, we have $G(n,k)=a_1^k F(n,k)$.
\end{porofo}
\begin{lemma}\label{lem22} For any polynomial $P(x)=\sum_{i=0}^{i=n}\alpha_i x^i$, 
	\[[x^{(\underline{k})}]P(x)=\frac{1}{ k!} \sum_{j=0}^{k} (-1)^{\,k-j} \binom{k}{j} P(j).\]
\end{lemma}
\begin{porofo}
	We have $x^i=\sum_{k\geq 0} S(i,k)x^{(\underline{k})}$ (see Comtet \cite{comtet}). Then
	
	$\begin{array}{ll}
	P(x)=&\sum_{i=0}^{n}\alpha_i\sum_{k= 0}^{n} S(i,k)x^{(\underline{k})}
	=\sum_{k= 0}^{n} \Big( \sum_{i=0}^{n}\alpha_i S(i,k) \Big) x^{(\underline{k})}\\
	&=\sum_{k= 0}^{n} \Big( \sum_{i=0}^{n}\alpha_i \frac{1}{k!} \sum_{j=0}^{j=k} (-1)^{k-j}\binom{k}{j} j^i \Big) x^{(\underline{k})}\\
	&=\sum_{k= 0}^{n} \Big(  \frac{1}{k!} \sum_{j=0}^{k} (-1)^{k-j}\binom{k}{j}  ( \sum_{i=0}^{n}\alpha_i j^i) \Big) x^{(\underline{k})}.\\
	\end{array}$
	
	So the conclusion.
\end{porofo}
\noindent
Applying Lemma \ref{lem22} to the polynomial $P_n(x)$ of Lemma \ref{lem11}, we obtain:
\begin{theorem}\label{ffromgram}
	Let $\big(\suit{F}\big)$ be a sequence satisfying \eqref{recu} with $\coul{a} = a_2 n + a_1 k + a_0$, $\coul{b} = 1$, and the usual initial conditions \eqref{condiusue}. 
	
	If $a_1 \neq 0$, then
	\begin{equation}\label{fnkfromgram}
	\suit{F} = \frac{1}{a_1^k\, k!} \sum_{j=0}^{k} (-1)^{\,k-j} \binom{k}{j} \prod_{l=1}^{n} (a_0 + a_1 j + l a_2).
	\end{equation}
\end{theorem}
\begin{appli}\rm 
	\begin{enumerate}
		\item	 The generalized Stirling numbers verify:
		\begin{equation}
		S_{\alpha,\beta,r}(n,k)=\frac{1}{\beta^k\, k!} \sum_{j=0}^{k} (-1)^{\,k-j} \binom{k}{j} \prod_{l=0}^{n-1} (r + \beta j - l \alpha)
		\end{equation}
		
		In particular,
		the $r$-Whitney numbers $W_{m,r}(n,k)$ of second kind satisfy: 
		\begin{equation}
		W_{m,r}(n,k)= \frac{1}{m^k\, k!} \sum_{j=0}^{k} (-1)^{\,k-j} \binom{k}{j} (r + m j)^n.
		\end{equation}
		
		In this case we obtain a result of Mez\"o and Ramirez in \cite{Mezoram} (Corollary 8).
		\item The numbers $\lambda_{m,r}(n,k)$ of Equation (\ref{equav3}) verify:
		\begin{equation}
		\suit{\lambda_{m,r}} = \frac{1}{k!} \sum_{j=0}^{k} (-1)^{\,k-j} \binom{k}{j} \prod_{l=1}^{n} (r + j + ml).
		\end{equation}
		\item The numbers $\Lambda_{m,r,s}(n,k)$of Equation (\ref{equak}) satisfy:
		\begin{equation}
		\Lambda_{m,r,s}(n,k)= \frac{1}{m^k\, k!} \sum_{j=0}^{k} (-1)^{\,k-j} \binom{k}{j} \prod_{l=j}^{n+j} (r+s + ml).
		\end{equation}
		In particular, the $r$-Whitney-Lah numbers $\Lambda_{m,r,r}$
		verify:\begin{equation}
		\Lambda_{m,r,r}(n,k)= \frac{1}{m^k\, k!} \sum_{j=0}^{k} (-1)^{\,k-j} \binom{k}{j} \prod_{l=j}^{n+j} (2r + ml).
		\end{equation}
		And the $r$-Lah numbers $\Lambda_{1,r,r}(n,k)$
		verify:
		\begin{equation}
		\Lambda_{1,r,r}(n,k)= \frac{1}{k!} \sum_{j=0}^{k} (-1)^{\,k-j} \binom{k}{j} \prod_{l=j}^{n+j} (2r + l).
		\end{equation}
		
		\item The numbers $\tau_{m,r}(n,k)$ of Equation (\ref{equatau}) verify:
		\begin{equation}
		\suit{\tau_{m,r}} = \frac{m^{n-k}}{k!} \sum_{j=0}^{k} (-1)^{\,k-j} \binom{k}{j} \prod_{l=j+1}^{j+n} (r + l)=.\frac{m^{n-k}}{k!} \sum_{j=0}^{k} (-1)^{\,k-j} \binom{k}{j} \frac{(r+j+n)!}{(r+j)!}
		\end{equation}
	\end{enumerate}
\end{appli}

\noindent Let's consider the generating series $F_n(x)=\sum_{k=0}^nF(n,k)x^k$ and $\overline{F}_k(y)=\sum_{n\geq k}F(n,k)y^k.$
Then $$\displaystyle F_n(x)=\sum_{\pi\in\mathcal{R}_{n,*}}\omega_{a,b}(\pi)x^{h(\pi)}=\sum_{\pi\in\mathcal{R}_{n-1,*}}\omega_{a,b}(\pi)x^{h(\pi)+1}+\sum_{\pi\in\mathcal{R}_{n-1,*}}(a_2n+a_1h(\pi)+a_0)\omega_{a,b}(\pi)x^{h(\pi)}.$$ 
Since $\sum_{\pi\in\mathcal{R}_{n-1,*}}\omega_{a,b}(\pi)h(\pi)x^{h(\pi)}=x\sum_{\pi\in\mathcal{R}_{n-1,*}}h(\pi)\omega_{a,b}(\pi)x^{h(\pi)-1}$, it follows that
\begin{equation}
F_n(x)=(x+a_2n+a_0)F_{n-1}(x)+a_1xF'_{n-1}(x).
\end{equation}
In the other hand, setting $\displaystyle\mathcal{R}_{*,k}=\bigcup_{n\geq k}\mathcal{R}_{n,k}$, we have
\begin{align*}
\overline{F}_k(y)&=\sum_{n\geq k}F(n,k)y^n= \sum_{\pi\in\mathcal{R}_{*,k}}\omega_{a,b}(\pi)y^{l(\pi)}\\&=\sum_{\pi\in\mathcal{R}_{*,k}}(a_2(l(\pi)+1)+a_1k+a_0)\omega_{a,b}(\pi)y^{l(\pi)+1}+\sum_{\pi\in\mathcal{R}_{*,k-1}}\omega_{a,b}(\pi)y^{l(\pi)+1}\\
&=a_2y^2\sum_{\pi\in\mathcal{R}_{*,k}}l(\pi)\omega_{a,b}(\pi)y^{l(\pi)-1}+(a_2+a_1k+a_0)y\sum_{\pi\in\mathcal{R}_{*,k}}\omega_{a,b}(\pi)y^{l(\pi)}\\
& \hspace{1cm} +y\sum_{\pi\in\mathcal{R}_{*,k-1}}\omega_{a,b}(\pi)y^{l(\pi)}\\
&=a_2y^2\overline{F}_k'(y)+(a_2+a_1k+a_0)y\overline{F}_k(y)+y\overline{F}_{k-1}(y).\\
\end{align*}
That is
\begin{equation}
(1-(a_0+a_1k+a_0))\overline{F}_k(y)=y\overline{F}_{k-1}(y)+a_2y^2\overline{F}_k'(y).
\end{equation}

\subsection{The case where $a_1= 0$}
Let us now consider the case $a_1=0$ and $a_2\neq 0$. Define $Q_n(x)=\prod_{i=1}^{n}(x+a_0+a_2i)$.
On the one hand,  we have: $Q_n(x)=\displaystyle\sum_{k=0}^n\big(\sum_{ \displaystyle\substack{A\subset [n], |A|=n-k}}\prod_{i\in A}(a_0+ia_2)\big)x^k$.
On the other hand, we have $\prod_{i=0}^{n-1}(u+i)=\sum_{j=0}^nc(n,j)u^j$, where $c(n,j$ is the unsigned Stirling numbers of first kind (see Comptet \cite{comtet}). Then:

\noindent	 $\begin{array}{ll} Q_n(x)&=\displaystyle\prod_{i=0}^{n-1}(x+(a_0+a_2)+a_2i)
=a_2^n\prod_{i=0}^{n-1}(\frac{x+a_0+a_2}{a_2}+i)\displaystyle=a_2^n\sum_{j=0}^{n}c(n,j)(\frac{x+a_0+a_2}{a_2})^j\\&
\displaystyle=a_2^n\sum_{j=0}^{n}c(n,j)(\sum_{k=0}^j{j\choose k}a_2^{-j}x^k(a_0+a_2)^{j-k}) \displaystyle=\sum_{k=0}^{n}(\sum_{j=k}^nc(n,j){j\choose k}a_2^{n-j}(a_0+a_2)^{j-k})x^k.
\end{array}$

\noindent 
So 
\begin{equation}[x^k]Q_n(x)=\displaystyle\sum_{ \substack{A\subset [n], |A|=n-k}}\prod_{i\in A}(a_0+ia_2)=\displaystyle\sum_{j=k}^nc(n,j){j\choose k}a_2^{n-j}(a_0+a_2)^{j-k}.
\end{equation}
Let $\big(\suit{F}\big)$ be a sequence satisfying \eqref{recu} with $\coul{a} = a_2 n + a_0$, $\coul{b} = 1$, and the usual initial conditions \eqref{condiusue}. 
The weight of each East step \es\ $(i-1,j)\to (i,j)$ by $\cou{a}{i}{j}$ is independent of $j$ and is equal to $a_0+a_2i$ when it is  the $i$-th step. Thus, if $\pi\in\mathcal{R}_{n,k}$, and if $A_\pi$  is the set of positions of the East steps of $\pi$, then its weight is $\omega(\pi)=\prod_{i\in A_\pi}(a_0+a_2i)$. Since each path $\pi\in\mathcal{R}_{n,k}$ can be identified with the set $A_{\pi}$
and $|A_\pi|=n-k$, it follows that 
$$F(n,k)=\displaystyle\sum_{\pi\in\mathcal{R}_{n,k}}\omega(\pi)=\displaystyle\sum_{\substack{A\subset [n], |A|=n-j}}\prod_{i\in A}(a_0+ia_2).$$ Hence
we obtain
\begin{theorem}\label{a1zero} If $\big(\suit{F}\big)_{n,k\in \mathbb{N}}$ be a sequence satisfying \eqref{recu} with $\coul{a} = a_2 n + a_0$, $\coul{b} = 1$, and the  usual initial conditions \eqref{condiusue}, then:
	\begin{equation}
	\sum_{k=0}^nF(n,k)x^k=\prod_{i=1}^{n}(x+a_0+a_2i).
	\end{equation}
	That is:
	\begin{equation}
	F(n,k)=\sum_{j=k}^nc(n,j){j\choose k}a_2^{n-j}(a_0+a_2)^{j-k}.
	\end{equation} 
	
\end{theorem}
\begin{appli}\rm 
	\begin{enumerate}
		\item 
		Let us consider the $r$-Whitney numbers of first kind obtained by setting 
		$a_2=-m$, $a_0=m-r$. Then 
		\begin{equation}
		w_{m,r}(n,k)=\sum_{j=k}^n(-1)^{n-k}m^{n-j}c(n,j){j\choose k}r^{j-k}
		\end{equation} 
		We obtain a result of Randrianirina in \cite{rabeza} (Equation (37)).
		\item 	More generally, let $\big(F(n,k)\big)$ be a sequence satisfying:
		\begin{equation}
		F(n,k)=F(n-1,k-1)+ (a_0+a_1k)F(n-1,k).
		\end{equation}
		Equation \ref{fnkfromgram} say that:
		\begin{equation}\label{fnkfromgram12}
		\suit{F} = \frac{1}{a_1^k\, k!} \sum_{j=0}^{k} (-1)^{\,k-j} \binom{k}{j} (a_0 + a_1 j)^n.
		\end{equation}
		Theorem  \ref{inverse} says that the inverse of the matrix $\mathbb{F}^T=(F(n,k))_{n,k}$, is the matrix $\mathbb{H}^T=((H(n,k)))$ satisfying
		\begin{equation}
		H(n,k) = H(n-1,k-1) + (-a_1n+a_1-a_0) H(n-1,k).
		\end{equation}
		We have 
		\begin{equation} H(n,k)=\sum_{j=k}^nc(n,j){j\choose k}(-1)^{n-k}a_1^{n-j}a_0^{j-k}.
		\end{equation}
	\end{enumerate}
\end{appli}
\begin{corollary}\label{tonerw2}
	Let $\big(\suit{F}\big)$ be a sequence satisfying \eqref{recu} with $\coul{a} = a_2 n + a_1 k + a_0$, $\coul{b} = 1$, and the usual initial conditions \eqref{condiusue}. We have
	\begin{equation}
F(n,k)= \sum_{j=k}^{n}
\left(
\sum_{i=j}^{n}
c(n,i)\binom{i}{j}
a_2^{n-i}
(a_2+a_0)^{i-j}
\right)
a_1^{j-k}
S(j,k).
	\end{equation}
That is 
\begin{equation}
	F(n,k)=
	\sum_{i=k}^{n}\sum_{j=k}^{i}
	\binom{i}{j} c(n,i) S(j,k)
	a_2^{n-i}
	(a_2+a_0)^{i-j}
	a_1^{j-k}.
\end{equation}
\end{corollary}

\begin{proof}
	Let $\big(U(n,k)\big)$ and $\big(V(n,k)\big)$ be two triangular sequences satisfying the usual initial conditions \eqref{condiusue}, with
	\begin{equation*}
		U(n,k) = (a_2 n + a_0) U(n-1,k) + U(n-1,k-1),
	\end{equation*}  
	and
	\begin{equation*}
		V(n,k) = a_1 k V(n-1,k) + V(n-1,k-1).
	\end{equation*}
	It is straightforward to see that $\mathbb{F} = \mathbb{U} \mathbb{V}$. On the other hand, the weights corresponding to $\big(V(n,k)\big)$ differ from those of $S(n,k)$ only by a factor of $a_1^{\,n-k}$, since $S(n,k)$ satisfies $S(n,k) = k S(n-1,k) + S(n-1,k-1)$. Hence, we have $V(n,k) = a_1^{\,n-k} S(n,k)$. Applying Theorem \ref{a1zero}, we obtain the desired conclusion.
\end{proof}

\subsection{The case where $\coul{a} = a_2 n + a_1 k + a_0$ and $\coul{b} = b_1k+b_0$}
Consider the sequence $(T(n,k))$ satisfying:
\begin{equation}
T(n,k)=(a_2n+a_1k+a_0)T(n-1,k)+(b_1k+b_0)T(n-1,k-1)
\end{equation}
with initial condition $T(0,0)=1$ and $T(n,k)=0$ if $n>k$.

We combine Lemma \ref{cor1} with Theorems \ref{ffromgram} and \ref{a1zero} to obtain the following.
\begin{theorem}
	Let $\big(\suit{T}\big)$ be a sequence satisfying \eqref{recu} with $\coul{a} = a_2 n + a_1 k + a_0$, $\coul{b} = b_1k+b_0$, and the usual initial conditions \eqref{condiusue}. 
	
	If $a_1 \neq 0$, then
	\begin{equation}\label{tnkn}
	\suit{T} = \frac{\argtgen{b_0+b_1}{b_1}{k}}{a_1^k\, k!} \sum_{j=0}^{k} (-1)^{\,k-j} \binom{k}{j} \prod_{r=1}^{n} (a_0 + a_1 j + r a_2),
	\end{equation} where $\argtgen{b_0 + b_1}{\, b_1 \,}{k}=(b_0 + b_1) \times (b_0 + 2b_1) \times \coss \times (b_0 + k b_1). $
	
	If $a_1=0$, then 
	\begin{equation}\label{tnk2}
	T(n,k)=\argtgen{b_0 + b_1}{\, b_1 \,}{k}\sum_{j=k}^na_2^{n-j}c(n,j){j\choose k}a_0^{j-k}.
	\end{equation}
\end{theorem}
\noindent
One can check that equation~\eqref{tnk2} coincides with that of Neurwirth~\cite{neuwirth2001}. But in general we can get the results of Neurwirth using these theorems. This is stated in Spivey~\cite{Spivey2011JIS} (equation (2)) as follows, with minor corrections and slight adjustments.
\begin{theorem}
	Let $\big(\suit{T}\big)$ be a sequence satisfying \eqref{recu} with $\coul{a} = a_2 n + a_1 k + a_0$, $\coul{b} = b_1k+b_0$, and the usual initial conditions \eqref{condiusue}. Then
	\begin{equation}
		T(n,k)=\argtgen{b_0 + b_1}{\, b_1 \,}{k} 	\sum_{i=k}^{n}\sum_{j=k}^{i}
		\binom{i}{j} c(n,i) S(j,k)
		a_2^{n-i}
		(a_2+a_0)^{i-j}
		a_1^{j-k}.
	\end{equation}
\end{theorem}
\noindent This now follows directly from Lemma \ref{cor1} and Corollary \ref{tonerw2}.

\end{document}